\documentclass[11pt]{amsart}
\usepackage{amsmath,amssymb}
\usepackage{color}
\usepackage[T1]{fontenc}
\usepackage{caption}

%\captionsetup[table]{skip=10pt}

\begin{document}
\newcommand{\chp}{\mathds{1}}

\newtheorem{thm}{Theorem}%[section]
\newtheorem{theorem}{Theorem}[section]
\newtheorem{lem}[thm]{Lemma}
\newtheorem{lemma}[thm]{Lemma}
\newtheorem{prop}[thm]{Proposition}
\newtheorem{proposition}[thm]{Proposition}
\newtheorem{cor}[thm]{Corollary}
\newtheorem{defn}[thm]{Definition}
\newtheorem*{remark}{Remark}
\newtheorem{conj}[thm]{Conjecture}

%\numberwithin{equation}{section}

\newcommand{\Z}{{\mathbb Z}} %cph changed from \mathbf
\newcommand{\Q}{{\mathbb Q}}
\newcommand{\R}{{\mathbb R}}
\newcommand{\C}{{\mathbb C}}
\newcommand{\N}{{\mathbb N}}
\newcommand{\FF}{{\mathbb F}}
\newcommand{\fq}{\mathbb{F}_q}
\newcommand{\rmk}[1]{\footnote{{\bf Comment:} #1}}

\newcommand{\bfA}{{\boldsymbol{A}}}
\newcommand{\bfY}{{\boldsymbol{Y}}}
\newcommand{\bfX}{{\boldsymbol{X}}}
\newcommand{\bfZ}{{\boldsymbol{Z}}}
\newcommand{\bfa}{{\boldsymbol{a}}}
\newcommand{\bfy}{{\boldsymbol{y}}}
\newcommand{\bfx}{{\boldsymbol{x}}}
\newcommand{\bfz}{{\boldsymbol{z}}}
\newcommand{\F}{\mathcal{F}}
\newcommand{\Gal}{\mathrm{Gal}}
\newcommand{\Fr}{\mathrm{Fr}}
\newcommand{\Hom}{\mathrm{Hom}}
\newcommand{\GL}{\mathrm{GL}}

\renewcommand{\mod}{\;\operatorname{mod}}
\newcommand{\ord}{\operatorname{ord}}
\newcommand{\TT}{\mathbb{T}}
\renewcommand{\i}{{\mathrm{i}}}
\renewcommand{\d}{{\mathrm{d}}}
\renewcommand{\^}{\widehat}
\newcommand{\HH}{\mathbb H}
\newcommand{\Vol}{\operatorname{vol}}
\newcommand{\area}{\operatorname{area}}
\newcommand{\tr}{\operatorname{tr}}
\newcommand{\norm}{\mathcal N} % norm =(\frac{ n+\sqrt{n^2-4}} 2)^2
\newcommand{\intinf}{\int_{-\infty}^\infty}
\newcommand{\ave}[1]{\left\langle#1\right\rangle} %  average
\newcommand{\Var}{\operatorname{Var}}
\newcommand{\Prob}{\operatorname{Prob}}
\newcommand{\sym}{\operatorname{Sym}}
\newcommand{\disc}{\operatorname{disc}}
\newcommand{\CA}{{\mathcal C}_A}
\newcommand{\cond}{\operatorname{cond}} % conductor
\newcommand{\lcm}{\operatorname{lcm}}
\newcommand{\Kl}{\operatorname{Kl}} %Kloosterman sum
\newcommand{\leg}[2]{\left( \frac{#1}{#2} \right)}  % Legendre symbol
\newcommand{\Li}{\operatorname{Li}}

\newcommand{\sumstar}{\sideset \and^{*} \to \sum}

\newcommand{\LL}{\mathcal L} %L-function of u
\newcommand{\sumf}{\sum^\flat}
\newcommand{\Hgev}{\mathcal H_{2g+2,q}}
\newcommand{\USp}{\operatorname{USp}}
\newcommand{\conv}{*}
\newcommand{\dist} {\operatorname{dist}}
\newcommand{\CF}{c_0} % Fejer constant
\newcommand{\kerp}{\mathcal K}

\newcommand{\Cov}{\operatorname{cov}}
\newcommand{\Sym}{\operatorname{Sym}}

\newcommand{\ES}{\mathcal S} % sums over AP
\newcommand{\EN}{\mathcal N} % sum over short intervals
\newcommand{\EM}{\mathcal M} % sum over pols of deg n
\newcommand{\Sc}{\operatorname{Sc}} %Secular coefficients
\newcommand{\Ht}{\operatorname{Ht}}

\newcommand{\E}{\operatorname{E}} % expectation
\newcommand{\sign}{\operatorname{sign}} %sign

\newcommand{\divid}{d} % the divisor function

\title{On the sum of digits of $1/M$ in $\fq[x]$}
\author{Ze\'ev Rudnick}
\address{School of Mathematical Sciences, Tel Aviv University, Tel Aviv 69978, Israel}
 \email{rudnick@tauex.tau.ac.il}
%\thanks{This research was supported by the Israel Science Foundation (grant No. 1881/20).}
 
\date{\today}

\begin{abstract}For certain primes $p$, the average digit in the expansion of $1/p$ was found to have a deviation from random behaviour related to the class number of the imaginary quadratic field $\Q(\sqrt{-p})$ (Girstmair 1994). In this short note, we observe that for the corresponding problem when we replace the integers by  polynomials over a finite field,   there is never any bias. The argument is elementary. 
\end{abstract}
\maketitle

%\section{Introduction}
 \subsection{Integers}

Let $b\geq 2$ be an integer. It has long been known (see e.g. \cite[Chapter VI]{Dickson}) that for primes $p$ such that  $\gcd(p,b)=1$, the base $b$ expansion of $1/p$ is purely periodic 
 \[
 \frac 1p = 0.\overline{a_1,a_2,\dots, a_T}
 \]
 where $a_j = a_j(p) \in \{0,1,\dots,b-1\}$ 
 and the period $T=\ord(b,p)$ is the order of $b$ in the multiplicative group of residues modulo $p$, and thus divides $p-1$. If $b \neq 1 \bmod p$  then the period is at least $2$.  
 
 %Likewise, for any integer $b\geq 2$ we can define the base $b$ expansion of $1/p$ for primes $p$ not dividing $b$, and get a similar expansion of period $T=\ord(b,p)$.

 Define the average digit by 
 \[
 A(p) = \frac 1T\sum_{j=1}^T a_j = \frac 1{p-1} \sum_{j=1}^{p-1} a_j  = \lim_{N\to \infty}\frac 1N  \sum_{j=1}^N a_j
 \]
 In many cases, e.g. if  $T$ is even (which happens for instance  if $T=p-1$ is maximal, that is, if $b$ is a primitive root modulo $p$) then it is easy to see that the average is that of a random string, that is
 \begin{equation}\label{Ap primitive}
 A(p) = \frac{b-1}2,\quad \ord(b,p)=p-1.
 \end{equation}
  But in several interesting cases there is a bias. For instance, Girstmair \cite{GirstmairActa, GirstmairMonthly, Girstmair1995} showed that if $\ord(b,p) = (p-1)/2$ and $p=3\bmod 4$ then 
  \[
  A(p) = \frac{b-1}2-\frac{(b-1)}{p-1}h_p
  \]
  where $h_p$ is the class number of the imaginary quadratic field $\Q(\sqrt{-p})$.  In particular, in this case we obtain $A(p)<(b-1)/2$. See also \cite{MurtyThan, Kalyan}.

 %Recently,  Frank Calegari and Soundararajan \cite{CS} examined ???????????

  \subsection{Polynomials} 
 In this short note, stimulated by a lecture \cite{CS} of Frank Calegari in the Number Theory Web seminar on August 5, 2021, I investigate the corresponding problem when we replace the integers by  polynomials over a finite field, and find that there is never any bias. The argument is elementary.

Let $\fq$ be a finite field of $q$ elements, and $\fq[x]$ the ring of polynomials with coefficients in $\fq$. For nonzero $f\in \fq[x]$ we define the norm
\[
|f| := \#\fq[x]/(f) = q^{\deg f}
\]
and the Euler totient function $\Phi(f) = \#(\fq[x]/(f))^*$, the size of the multiplicative group modulo $f$. For $P$ irreducible, we have $\Phi(P)=|P|-1$.  

 Let $B(x)\in \fq[x]$ be a polynomial of positive degree. Then for any rational function $f\in \fq(x)$ we have the base $B$ expansion  
 \[
 f(x) = \sum_{j=-\infty}^J a_{-j}(x)B(x)^j
 \]
 with $a_j(x)\in \fq[x]$, $\deg a_j<\deg B$ (or possibly $a_j(x)=0$). The $a_j(x)$ are the ``base $B$ digits of $f$'' and we write
 \[
 f(x)  = a_{-J}a_{-(J-1)} \dots a_0\;.\;a_1a_2 a_3\dots 
 \]
 
 Now let $M\in \fq[x]$ be a polynomial of positive degree, coprime to $B$. 
 As in the case of the integers, one sees that the base $B$ expansion of $1/M(x)$ is periodic:
 \[
 \frac 1M = 0.\overline{a_1a_2 \dots a_T}
 \]
 and the minimal period $T$ is the order of $B$ in the multiplicative group $(\fq[x]/(M))^*$, hence divides $\Phi(M) $.  
 If $B\neq 1 \bmod M$ then the period $T$ is at least $2$, see examples in tables \ref{table:B=x} and \ref{table:B=x(x-1)}.  
 \begin{table}[h!]
\centering  
 \caption{The expansion of $1/M$ in base $B(x)=x$ for $q=2$ for all $M$ of degree $\leq 4$, coprime to $B(B-1)$; all are irreducible except $x^4+x^2+1 = (x^2+x+1)^2$. The possible digits are $\{0,1\} = \FF_2$. }
\label{table:B=x}
\begin{tabular}{ |c|c|c| }  
 \hline
 $M$ & $\ord(x,M)$ & $1/M$    
  \\    \hline
   & & 
 \\  
 $x^2+x+1$ & $3$ & $0.\overline{0,1,1} $  
 \\ [0.5ex]
 $  x^3+x+1$ & $7$ & $0.\overline{0,0,1,0,1,1,1}$  
 \\ [0.5ex]
  $ x^3+x^2+1$ & $7$ & $0.\overline{ 0,0,1,1,1,0,1}$ 
 \\  [0.5ex]  
     $x^4+x^3+1$ & $15 $ &  $0.\overline{0,0,0,1,1,1,1,0,1,0,1,1,0,0,1}$   
     \\ [0.5ex] 
  $x^4+x+1$ & $15$ &  $0.\overline{0,0,0,1,0,0,1,1,0,1,0,1,1,1,1 }$ \\ [0.5ex] 
 $x^4+x^3+x^2+x+1$ & $5$ & $0.\overline{0,0,0,1,1 }$
  \\ [0.5ex] 
  $x^4+x^2+1 = (x^2+x+1)^2$ & 6 &  $0.\overline{0,0,0,1,0,1}$
 \\
 \hline
\end{tabular}

\end{table}

\begin{table}%[h!]
%\centering  
\begin{center}
\caption{The expansion of $1/P$ in base $B(x)=x(x-1)$ for $q=2$ for all   $P$ of degree $\leq 4$, coprime to $B(B-1)$ (necessarily irreducible). The possible digits are $\{0,1,x,x+1\}$.}
\label{table:B=x(x-1)}
\begin{tabular}{ |c|c|c| }  
 \hline
 $P$   & $1/P$    
  \\    \hline
   &   
  \\ [0.5ex]
 $  x^3+x+1$ & $0.\overline{0,x,1,x,1+x,1+x,1}$  
 \\ [0.5ex]
  $ x^3+x^2+1$   & $0.\overline{0,1+x,1,1+x,x,x,1}$ 
 \\  [0.5ex]  
     $x^4+x^3+1$   &  $0.\overline{0,1,1+x,1,1,x,x,0,1+x,0,x,1+x,1+x,x,1}$   
     \\ [0.5ex] 
  $x^4+x+1$  &  $ 0.\overline{0,1,1}$ \\ [0.5ex] 
 $x^4+x^3+x^2+x+1$  & $ 0.\overline{0,1,x,1,1,1+x,1+x,0,x,0,1+x,x,x,1+x,1}$
 \\
 \hline
\end{tabular}
 \end{center}
 
\end{table}

 \subsection{Sum of digits}
 We define the sum of digits function by 
 \[
 S_B(M)(x) = a_1(x)+a_2(x)+\dots +a_T(x) \in \fq[x]
 \]
  which is a polynomial of degree less than $\deg B$.  If $M=P$ is irreducible, then    $T$ divides $\Phi(P)=|P|-1=q^{\deg P}-1$, which is an integer coprime to the characteristic of the finite field, and so by analogy with the integers, we can define the average digit by 
  \[
  A_B(P)(x) =\frac 1T S_B(P)(x) = \frac 1{|P|-1} \sum_{j=1}^{|P|-1} a_j(x).
  \]

If $B=1\bmod M$, that is $M$ divides $B-1$, then writing $B-1=KM$ gives an expansion with period one:
\[
\frac 1M = 0.\overline{K}
\]
and $S_B(M)=K$.   
It turns out that in all other cases, the sum of digits (hence the average digit $A_B(P)$ for $P$ prime) always vanishes: %at least if $P$ is coprime to $B(B-1)$:
\begin{thm}\label{prop:S is zero}
%Assume that\footnote{This excludes some interesting cases such as $B(x)=x^2+1$.} $B(x)-1$ is separable. 
If $M$ is coprime to $B(B-1)$ then $S_B(M)(x)=0$. 
\end{thm}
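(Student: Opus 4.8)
The plan is to convert the periodicity of the expansion into a polynomial identity, push that identity modulo $B-1$, and finish with a degree count.

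First I would record the rational-function form of the periodic expansion. Since $1/M = 0.\overline{a_1 a_2 \cdots a_T}$ with $T = \ord(B,M)$, summing the geometric series in $B^{-T}$ gives
\[
\frac 1M = \frac{a_1 B^{T-1} + a_2 B^{T-2} + \cdots + a_{T-1}B + a_T}{B^T-1},
\]
so, setting $N := \sum_{j=1}^{T} a_j B^{T-j} \in \fq[x]$ — a genuine polynomial, since $M \mid B^T-1$ by definition of $T$ — we obtain the identity $M N = B^T-1$ in $\fq[x]$.

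Next I would apply the quotient map $\fq[x] \to \fq[x]/(B-1)$, i.e. ``substitute $B=1$''. It kills $B^T-1$, it sends $N$ to $\sum_{j=1}^T a_j = S_B(M)$ (each power $B^{T-j}$ reduces to $1$), and it sends the class of $M$ to a unit of $\fq[x]/(B-1)$, because $\gcd(M,B-1)=1$ is part of the hypothesis. From $MN=B^T-1$ it then follows that $S_B(M)\equiv 0\pmod{B-1}$. Finally, each digit $a_j$ has degree $<\deg B$, hence so does their sum $S_B(M)$; but a polynomial of degree $<\deg(B-1)=\deg B$ that is divisible by $B-1$ must be $0$. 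Therefore $S_B(M)=0$, and dividing by $T$ gives $A_B(P)(x)=0$ when $M=P$ is prime.

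I do not expect a serious obstacle here — the argument is elementary, as advertised. The only points needing care are bookkeeping ones: that the expansion of $1/M$ is \emph{purely} periodic (which uses $\gcd(M,B)=1$, also part of the hypothesis) so that the geometric-series identity has exactly the stated shape, and that the degree bound $\deg S_B(M)<\deg B$ is \emph{exactly} tight against $\deg(B-1)$, which is precisely what upgrades the congruence $S_B(M)\equiv 0 \pmod{B-1}$ to the equality $S_B(M)=0$. Note that the coprimality of $M$ with $B-1$ — the extra hypothesis beyond plain periodicity — is used exactly once, to cancel $M$ modulo $B-1$, and this is consistent with the period-one exception $M\mid B-1$ discussed just before the theorem, where indeed $S_B(M)=K\neq 0$ in general.
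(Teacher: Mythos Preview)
Your proof is correct and is essentially the same as the paper's: your polynomial $N=\sum_{j=1}^{T}a_jB^{T-j}$ is exactly the paper's $K$ (the paper introduces $K$ via $B^T-1=MK$ and then checks its base-$B$ digits are $a_T,\dots,a_1$, whereas you start from the digits and sum the geometric series; these are the two directions of the same computation). The reduction modulo $B-1$, the use of $\gcd(M,B-1)=1$ to cancel $M$, and the final degree comparison $\deg S_B(M)<\deg(B-1)$ are identical to the paper's argument.
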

 \begin{proof}
The proof is elementary and straightforward. We write
\[
B(x)^T-1=M(x)K(x)
\]
%where $\deg K<T\deg B$, 
and use the base $B$ expansion of $K$:
\[
K(x) = k_0(x) + k_1(x)B+\dots +k_{T-1}(x)B(x)^{T-1}
\]
where $\deg k_j<\deg B$.  
Then the ``digits'' of $1/M$ are simply
\[
a_1=k_{T-1}, \; a_2=k_{T-2}, \dots , a_T = k_0
\]
and 
\begin{equation}\label{lem:S vs K}
%\boxed{
S(x) = \sum_{j=0}^{T-1} k_j(x).
%}
\end{equation}

This is because
\[
\begin{split}
\frac 1M &= \frac{K}{B^T-1} = \frac{K}{B^T}\left(1+ \frac 1{B^T} + \frac 1{B^{2T}}+\dots \right)  
\\
&= 
\frac{k_0+k_1B+\dots +k_{T-1}B^{T-1}}{B^T} \left(1+ \frac 1{B^T} + \frac 1{B^{2T}}+\dots \right)   
\\
&= \frac{k_{T-1}}{B}+\frac{k_{T-2}}{B^2} +\dots
=0. \overline{k_{T-1}k_{T-2}\dots k_0} .
\end{split}
\]
Therefore we find
\[
S(x) = \sum_{j=0}^{T-1} k_j(x).
\]

%We can now prove Theorem~\ref{prop:S is zero}.  
%Let $\gamma_1,\dots , \gamma_{\deg B}\in \overline{\fq}$ be the roots of $B(x)-1$, which lie in an extension $\FF_{q^b}$ of degree at most $\deg B$. \textcolor{red}{(they are distinct because we assume $B(x)-1$ is separable.)}
 
 Next, note that $K= S \bmod B-1$, because $B=1 \mod B-1$ and so 
 \[
 K=\sum_j k_jB^j=\sum k_j \bmod B-1
 \]
 which equals $S$ by  \eqref{lem:S vs K}. 
 
 On the other hand, 
 \[
 KM = B^T-1 =(B-1)(B^{T-1}+\dots +1) = 0\bmod B-1
 \]
 and so $B-1 \mid KM$. But by assumption, $\gcd(M,B-1)=1$  and  so $B-1\mid K$, and therefore
 \[
 S=K=0 \bmod B-1.
 \] 
  But $\deg S<\deg B = \deg(B-1)$ while $B-1 \mid S$ which implies $S(x)=0$.
 \end{proof}

%\begin{remark}  Because $T$ divides  $\#(\fq[x]/(P))^* = q^{\deg P}-1$, which is coprime to the characteristic of $\fq$, we have $1/T\in \fq^*$ so we can also define the ``average digit''
%\[
%A_{P,B}(x) = \frac 1T S_B(P)(x)
%\] 
%but it does not seem to carry more information than $S$. 
%\end{remark}

\end{document}